\definecolor{linkred}{RGB}{0,191,255} 
\definecolor{linkblue}{RGB}{16, 78, 139}
	\titlespacing{\section}{0pt}{12pt}{0pt}
	\titlespacing{\subsection}{0pt}{6pt}{0pt}
\long\def\@footnotetext#1{%
\H@@footnotetext{%
\ifHy@nesting 
\hyper@@anchor{\@currentHref}{#1}%
\else 
\Hy@raisedlink{\hyper@@anchor{\@currentHref}{\relax}}#1%
\fi 
}}
\def\@footnotemark{%
\leavevmode 
\ifhmode\edef\@x@sf{\the\spacefactor}\nobreak\fi 
\H@refstepcounter{Hfootnote}%
\hyper@makecurrent{Hfootnote}%
\hyper@linkstart{link}{\@currentHref}%
\@makefnmark 
\hyper@linkend 
\ifhmode\spacefactor\@x@sf\fi 
\relax 
}%
\theoremstyle{plain}
\newtheorem{theorem}{Theorem}[section]
\newtheorem*{theorem-otal}{Theorem 1.3}
\newtheorem{proposition}[theorem]{Proposition}
\newtheorem{lemma}[theorem]{Lemma}
\theoremstyle{definition}
\newtheorem{remark}[theorem]{Remark}
\newcommand{\Z}{{\mathbb Z}}
\newcommand{\area}{{\rm area}}
\newcommand{\con}{{\rm con}}
\newcommand{\diam}{{\rm diam}}
\newcommand{\length}{{\rm length}}
\newcommand{\Cr}{{\rm cr}}
\long\def\symbolfootnote[#1]#2{\begingroup%
\def\thefootnote{\fnsymbol{footnote}}\footnote[#1]{#2}\endgroup}
\def\blfootnote{\xdef\@thefnmark{}\@footnotetext}
\begin{document}

{\Large \bfseries Crossing numbers of dense graphs on surfaces}

{\large Alfredo Hubard, Arnaud de Mesmay and Hugo Parlier}\\

\vspace{0.5cm}

{\bf Abstract.}
In this paper, we provide upper and lower bounds on the crossing numbers of dense graphs on surfaces, which match up to constant factors.
First, we prove that if $G$ is a dense enough graph with $m$ edges and $\Sigma$ is a surface of genus $g$, then any drawing of $G$ on $\Sigma$ incurs at least 
$\Omega \left(\frac{m^2}{g} \log ^2 g\right)$ crossings. The poly-logarithmic factor in this lower bound is new even in the case of complete graphs and disproves a conjecture of Shahrokhi, Sz\'ekely and Vrt'o from 1996.
Then we prove a geometric converse to this lower bound: we provide an explicit family of hyperbolic surfaces such that for any graph $G$, sampling the vertices uniformly at random on this surface and connecting them with shortest paths yields $O\left(\frac{m^2}{g} \log ^2 g\right)$ crossings in expectation.

\section{Introduction} \label{s:introduction}

The relationship between graph drawing and surfaces has been a driving theme in combinatorics and topology over the last century. Crossing number inequalities for graph drawings in the plane, were first studied by Tur\'an~\cite{Turan1954}, Hill ~\cite{HararyHill1963} and Guy~\cite{Guy1960}. The topic has since led to a plethora of beautiful mathematics, including the so-called crossing number inequality, or crossing lemma~\cite{ACNS1982,Leighton1983}, and many generalizations. Combined with the Ringel-Youngs solution to the genus of complete graphs \cite{RingelYoungs1968}, the question of how to minimize the number of crossings when drawing complete graphs on surfaces arises. The precise question depends on the relationship between the number of vertices $n$ and the genus $g$ of the surface they are drawn on. In this paper we focus on a large number of vertices compared to the genus.

The Harary-Hill conjecture states that every drawing of the complete graph on $n$ vertices in the plane incurs at least $\frac{1}{4}\lfloor \frac{n}{2}\rfloor \lfloor \frac{n-1}{2}\rfloor\lfloor \frac{n-2}{2}\rfloor\lfloor \frac{n-3}{2}\rfloor\sim \frac{n^4}{64}$ crossings. Different constructions attain this bound, and the conjecture has been confirmed for several families of drawings (see~\cite{abrego2018bishellable,abrego2012two,abrego2014shellable,balko2015crossing}). As the genus grows however, the crossing number is naturally reduced, and the question becomes to quantify to which extent it does. In what follows, we denote by $\Cr_g(G)$ the crossing number of the graph $G$ when drawn in the closed surface of genus $g$.

\subsection{Lower bounds: a new crossing lemma for dense graphs on surfaces}

In \cite{ShahrokhiSzekelyVrto1998}, Shahrokhi, Sykora, Sz\'ekely and Vr'to generalized the crossing lemma to drawings on surfaces. They showed that for a graph with $n$ vertices and $m$ edges, if $\max(64g,8n) \leq m \leq \frac{n^2}{g}$ then $\Cr_g(G)\geq \Omega( \frac{m^3}{n^2})$. For larger values of $m$, this estimate does not hold anymore. Instead, they showed if $m\geq \max (64 g, \frac{n^2}{g},8n)$, then $\Cr_g(G)=\Omega(\frac{m^2}{g})$. They also proved the existence of a drawing of $K_n$ in a surface of genus $g$ with $O(\frac{n^4}{g} \log ^3 g)$ crossings. Shahrokhi, Sz\'ekely and Vr'to improved this bound in ~\cite{ShahrokhiSzekelyVrto1998} to 
$\Cr_g(K_n) =O(\frac{n^4}{g} \log^2 g)$. They conjectured that this construction could be improved, specifically, that there exists drawings satisfying $\Cr_g(K_n)=O (\frac{n^4}{g})$, matching their lower bound. Our main result disproves this conjecture, and shows that their upper bound has the correct order of growth. 

\begin{theorem}\label{main} Let $G$ be a graph with $n$ vertices and $m$ vertices and let $\Sigma$ be a surface of genus $g\geq 2$. For any $\varepsilon>0$, there exists a constant $B(\epsilon)$ such that if $m \geq 10^6 \max(n^{3/2}, ng,  \frac{n^2}{g^{1/2-\varepsilon}})$, then \[\Cr_g(G)\geq B(\varepsilon)m^2\frac{\log^2 g}{g}.\]
\end{theorem}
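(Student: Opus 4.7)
The plan is to combine the classical probabilistic/subsampling technique for crossing lemmas with a refined surface inequality that exploits the hyperbolic geometry of Buser--Sarnak type surfaces. Recall the standard derivation of the weaker Shahrokhi--Sykora--Sz\'ekely--Vr\'to bound $\Cr_g(G) = \Omega(m^2/g)$: one combines the Euler-type inequality $X \geq m - 3n - 6g$ for any drawing of $G$ on $\Sigma_g$ with $X$ crossings (obtained by planarizing and applying $E \leq 3V + 6g - 6$) with a random vertex deletion, keeping each vertex with probability $p$ and optimizing; in the dense regime the genus term dominates and the bound comes out as $m^2/g$.

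To gain the extra $\log^2 g$ factor, I would replace this Euler inequality by a sharpened version of the form
\[
    X \;\geq\; c\,m \;-\; O(n) \;-\; O\!\left(\frac{g}{\log^2 g}\right),
\]
valid for drawings on $\Sigma$ once a Buser--Sarnak hyperbolic metric with $\sys(\Sigma) \geq c \log g$ and $\area(\Sigma) = \Theta(g)$ is fixed. The geometric intuition is that although the raw Euler penalty for handles is $O(g)$, on a surface of large systole the handles can be dismantled more cheaply: cutting $\Sigma$ along a short homology basis of geodesic loops (of length $O(\log g)$ each, which exist on Buser--Sarnak surfaces) reduces the drawing to a planar one, and a Crofton-type averaging on the hyperbolic surface of area $\Theta(g)$ shows that each unit of cut length is met by only $O(1/\log g)$ edges on average. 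Summing, the total edge loss is $O(g/\log g)$, and after the squared inversion implicit in the subsampling machinery this reduces the genus penalty to $g/\log^2 g$.

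With the refined inequality in hand, the rest follows the standard template: sample each vertex independently with probability $p$, apply the refined inequality to the sub-drawing on $\Sigma$ (replacing surviving crossings by degree-$4$ vertices before planarization), take expectations, and optimize $p$. The density thresholds $n^{3/2}$, $ng$, and $n^2/g^{1/2-\varepsilon}$ appear precisely to guarantee that the optimal $p$ sits in the regime where the sharpened genus term dominates the other error terms, and the $\varepsilon$-slack reflects how close $m$ may approach the density boundary while still permitting this balancing to go through.

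The main obstacle is the refined Euler inequality itself. One must fix a Buser--Sarnak metric on $\Sigma$, straighten or combinatorially approximate the drawing by a geometric configuration whose crossings are controlled, identify a short cut system (for instance, geodesic representatives of a short homology basis), and quantify how few edges such a cut system meets on average, using uniform area estimates for tubular neighborhoods of systolic loops. Once these hyperbolic-geometric inputs are in place, the remaining steps are a direct adaptation of the Ajtai--Chv\'atal--Newborn--Szemer\'edi probabilistic machinery.
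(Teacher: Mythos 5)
There is a genuine gap, and it sits exactly where you locate the main work: the ``refined Euler inequality'' $X \geq c\,m - O(n) - O\bigl(g/\log^2 g\bigr)$ is false as a statement about drawings on $\Sigma$. The ordinary Euler bound $X \geq m - 3n - (6g-6)$ is essentially tight: there exist graphs that embed in the genus-$g$ surface with no crossings while having $m = \Theta(g)$ edges and few vertices (take a triangulation of $\Sigma$ with $n = O(\sqrt{g})$ vertices), and for such a graph your inequality would force $X \geq c\,\Theta(g) - O(\sqrt{g}) - O(g/\log^2 g) > 0$, contradicting $X=0$. Fixing a Buser--Sarnak metric cannot rescue this, because $\Cr_g$ quantifies over \emph{all} topological drawings: the adversary's drawing (and, crucially, the sub-drawings produced by your random vertex sampling, which is where the inequality must be applied) need not consist of geodesics or of curves of controlled length in any metric chosen in advance, so the Crofton/short-homology-basis averaging has nothing to act on. If instead you restrict the inequality to geodesic drawings in the fixed metric, it can no longer be fed into the Ajtai--Chv\'atal--Newborn--Szemer\'edi subsampling machinery to bound the topological crossing number, so the argument does not close either way. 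Note also that the genus penalty in the Euler-type bound counts handles, not systolic length, so a large systole by itself does not make handles ``cheaper to dismantle.''

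The paper's proof avoids this by reversing the roles of drawing and metric: rather than fixing a hyperbolic structure and hoping the drawing respects it, it planarizes the optimal drawing (crossings become degree-$4$ vertices), completes to a triangulation, and invokes the Koebe--Andreev--Thurston circle packing theorem to produce a hyperbolic metric \emph{adapted to the drawing}, in which crossings correspond to disjoint disks. Then a dichotomy on the average edge length does the work: if edges are short on average, many edges concentrate in a ball of radius $O(\log g)$, which by Gauss--Bonnet has genus roughly $g^{1-2\varepsilon}$, and the known $\Cr \geq \Omega(m^2/g')$ bound applied inside that ball already gives the claim; if edges are long on average, the total edge length is $\Omega(m \log g)$, and Cauchy--Schwarz together with the area bound $\sum_i r_i^2 = O(g)$ on the packing radii yields $\Cr_g(G) \geq \bigl(\sum_i r_i\bigr)^2/(4g) = \Omega\bigl(m^2 \log^2 g / g\bigr)$. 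If you want to salvage your outline, the missing idea you need is precisely this drawing-dependent geometrization; a metric fixed a priori, however systolically extremal, cannot control an arbitrary topological drawing.
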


The poly-logarithmic factor in this theorem is new, even for complete graphs. For concreteness, in that case, the proof shows that we can safely take $B(\varepsilon)=10^{-4}$, but we have made no attempt to optimize the constants. At first glance, it is not obvious where the poly-logarithmic factor comes from (which explains why it was conjectured not to be there in the first place), but the proof is enlightening in this regard. As in \cite{hubard2025crossing}, it relies on hyperbolic geometry, and in particular the relationship between hyperbolic structures and embedded graphs via circle packings.

Lower bounds on the crossing number of complete graphs have multiple applications beyond graphs drawing. For instance, they were used by Biswal, Lee and Rao~\cite{biswal2010eigenvalue} and then Kelner, Lee, Price and Teng~\cite{kelner2011metric} to provide upper bounds on the eigenvalues of the Laplacians of planar and surface-embedded graphs. Plugging the bound of our Theorem~\ref{main} in~\cite[Theorem~3.1]{biswal2010eigenvalue} and in~\cite[Section~3]{kelner2011metric} immediately improves their bounds by polylogarithmic factors (but note that optimal bounds for this problem have already been obtained using different methods~\cite{amini2018transfer}).

\begin{remark}
We do not know what is the smallest $m$ for which our bound holds. Around $m=\frac{n^2}{2g}$, a graph consisting of a union of $g$ cliques of $\frac{n}{g}$ vertices each satisfies $m=g\binom{\frac{n}{g}}{2}\sim \frac{n^2}{2g}$ and has crossing number $O(g(\frac{n}{g})^4)=O(\frac{n^4}{g^3})$. This is of the same order as $O(\frac{m^3}{n^2})$ and as $O(\frac{m^2}{g})$, so the $\log^2 g$ factor is not yet apparent. Yet our theorem only applies when the number of edges is $\Omega(\frac{n^2}{g^{1/2-\varepsilon}})$. While the central case of Theorem \ref{main} is the complete graph, the main open question that is left open in this direction (beyond the challenge of finding the exact constants) is to understand better the different phase transitions that the crossing number undergoes as the number of edges grows relative to the number of vertices of the graph and the genus of the surface.
\end{remark}

\subsection{Upper bounds: geometric drawings in high genus}
Our proof of Theorem \ref{main} geometrizes a problem in combinatorial topology, and then uses geometric estimates. More specifically, the Cauchy-Schwarz inequality transforms lower bounds on the average edge length into lower bounds on the crossing number. We now discuss a variant in which the crossing number is defined in a geometric way from the get go.

The \emph{spherical crossing number} is analogous to the crossing number, but edges are drawn by spherical geodesic shortest paths on the round sphere. The Harary-Hill conjecture was recently \cite{streltsova2025spherical} confirmed for spherical crossing numbers. For a chosen surface and metric, the \emph{geometric random graph} on $n$ vertices and parameter $r$ is the intersection graph of metric balls of radius $r$ whose centers are chosen uniformly and independently at random. It comes naturally drawn with edges represented by shortest paths. Moon~\cite{Moon1965} showed that the geometric random drawing of the complete graph (with $r=\pi$) on the round sphere has $\frac{n^4}{64}+o(n^4)$ crossings in expectation, matching the conjectured optimal drawings at the first order. 

Let $G_{n,m,g}$ be a graph with $n$ vertices and $m$ edges that minimizes $\Cr_g$.
It was shown in \cite{pach1996new} that for every fixed $g$, in the regime $n\ll m \ll n^2$, the limit $\lim_{n \to \infty} \Cr_g(G_{n,m,g})\frac{n^2}{m^3}$  exists and is independent of $g$. This constant $c_{mid}$ is known as the \emph{midrange constant} and is known to be larger than $\frac{1}{27.7}\sim 0.03$ and smaller than $\frac{8}{9\pi^2}\sim~0.09$ \cite{bungener2024improving,czabarka2019some}. The upper bound construction is given by the geometric random graph on the round sphere.

Generalizing these results beyond the planar or constant genus case, one might wonder if optimal (or near optimal) drawings of complete graphs can be achieved by a drawing in which all edges are shortest paths on the surface enriched with a well chosen metric of constant curvature.

 The answer, at least for the projective plane with the round metric, is no: Elkies~\cite{elkies2017crossing} computed the expected crossing number of a complete random geometric graph on the round projective plane and conjectured it to be optimal. Later Arroyo, McQuillan, Richter, Salazar and Sullivan~\cite{arroyo2021drawings} constructed a topological drawing of the complete graph on the projective plane which has, asymptotically, less crossings than the geometric bound of Elkies. 
 
 Elkies also computed the expected crossing number of the random geometric drawing of the complete graph for every flat torus. He discovered that this expectation is minimized for the honeycomb torus, $\mathbb C/\Z[1,e^{2\pi i/3}]$ which turns out to also be extremal for the systolic inequality, among many other extremal problems in geometry.

 Recall that the systolic inequality for orientable surfaces of genus $g$ shows that for every Riemannian metric $m$, on a surface of genus $g$, if we denote by $sys(m)$ the length of the shortest non contractible curve, then $sys(m)^2\leq \area(m)\frac{\log^2 g}{g}$. The similarity of this formula with the estimate in Theorem~\ref{main} and the aforementioned result by Elkies makes one wonder if, among drawings in which edges are drawn by shortest paths in a hyperbolic surface, the drawing of the complete graph minimizing the number of crossings is realized on a surface that optimizes the systolic inequality. 
 
  Our second result follows this train of thought and provides the following converse to Theorem \ref{main}, that strengthens the aforementioned result of ~\cite{ShahrokhiSzekelyVrto1998}. 

\begin{theorem}\label{thm:upperbound}
    There exists a family of hyperbolic surfaces $S_k$ of genus $g_k \rightarrow \infty$ such that for any $n>0$, sampling $n$ points uniformly at random on $S_k$ and connecting them with shortest paths yields a drawing of $K_n$ on $S_k$ with $O(n^4 \frac{\log^2 g_k}{g_k})$ crossings in expectation.
\end{theorem}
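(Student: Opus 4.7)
The plan is to take $S_k$ to be a family of arithmetic hyperbolic surfaces whose diameter is as small as possible --- of order $\log g_k + O(1)$ --- for instance the Buser--Sarnak surfaces arising from principal congruence subgroups of arithmetic Fuchsian groups. These have $\sys(S_k) = \Omega(\log g_k)$ and, by an exponential packing argument using that a ball of radius $D$ in the hyperbolic plane has area $2\pi(\cosh D - 1)$, diameter $D = \diam(S_k) = \log g_k + O(1)$ (the lower bound $D \geq \log g_k + O(1)$ being forced by $\area(S_k) = 4\pi(g_k - 1)$, matched by the arithmetic construction). By linearity of expectation, the expected crossing number of the random geodesic drawing of $K_n$ equals $\binom{\binom{n}{2}}{2} = O(n^4)$ times the expected pairwise crossing count of two independent uniformly-random shortest-path geodesic arcs $\gamma_1, \gamma_2$, so it suffices to show that this pairwise expectation is $O(\log^2 g_k / g_k)$.

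For the pairwise estimate I would reparametrize each pair $(p_i, q_i)$ by $(v_i, r_i) \in T^1 S_k \times \mathbb{R}_{\geq 0}$, where $v_i$ is the unit tangent at $p_i$ pointing toward $q_i$ and $r_i = d(p_i, q_i)$; hyperbolic polar coordinates give the Jacobian $dp_i \, dq_i = \sinh(r_i)\, dv_i\, dr_i$ (off the cut locus). Dropping the condition that $\gamma_{v_i, r_i}$ (the geodesic segment of length $r_i$ starting at $v_i$) is the \emph{shortest} path between its endpoints only enlarges the integral, so
\[
\mathbb{E}[\#(\gamma_1 \cap \gamma_2)] \;\leq\; \frac{1}{\area(S_k)^4}\int_{(T^1 S_k)^2}\!\!\int_0^D\!\!\int_0^D \#(\gamma_{v_1, r_1}\cap \gamma_{v_2, r_2})\,\sinh(r_1)\sinh(r_2)\, dr_1\, dr_2\, dv_1\, dv_2.
\]
A Cauchy--Crofton computation using Liouville's theorem (with Jacobian $|\sin\phi|$ converting between the crossing parametrization $(s,\phi, t) \mapsto v$ and the Liouville measure on $T^1 S_k$) yields, for each fixed $v_1, r_1, r_2$, the identity $\int_{T^1 S_k} \#(\gamma_{v_1,r_1}\cap \gamma_{v_2,r_2})\, dv_2 = 4\, r_1\, r_2$. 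Integrating $dv_1$ over $T^1 S_k$ (total mass $2\pi\,\area(S_k)$) and then the $\sinh r_i\, dr_i$ factors gives
\[
\mathbb{E}[\#(\gamma_1 \cap \gamma_2)] \;\leq\; \frac{8\pi}{\area(S_k)^3}\left(\int_0^D r\sinh r\, dr\right)^2.
\]
Since $\int_0^D r\sinh r\, dr = O(D e^D)$ and $D = \log g_k + O(1)$, one has $e^D = \Theta(g_k)$ and this integral is $O(g_k \log g_k)$; with $\area(S_k) = \Theta(g_k)$ the right-hand side is $O(\log^2 g_k / g_k)$, and the $O(n^4)$ factor completes the bound.

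The main obstacle is the integral-geometric step. The reparametrization $(p,q) \mapsto (v, r)$ fails on the cut locus, which one handles by passing to the above upper bound where $\gamma_{v,r}$ is any geodesic segment rather than a minimizing one. The Cauchy--Crofton identity $\int \# \, dv_2 = 4 r_1 r_2$ is a standard application of Santaló's kinematic formulas in constant negative curvature, but its usefulness here hinges on the $\sinh r$ Jacobian being exactly compensated by the exponential volume growth $e^D \sim g_k$ of hyperbolic balls; this is precisely why one needs a family of surfaces achieving the minimum possible diameter $\log g_k + O(1)$ (coincidentally a systolic-type extremal regime), and why the sharp bound is $\log^2 g_k / g_k$ rather than $1/g_k$.
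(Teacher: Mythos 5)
Your integral-geometric machinery is essentially correct: the polar-coordinate Jacobian $dp\,dq=\sinh r\,dr\,dv$, the relaxation from minimizing segments to arbitrary geodesic segments of length at most $D=\diam(S_k)$, and the Crofton/Santal\'o identity $\int_{T^1S}\#(\gamma_{v_1,r_1}\cap\gamma_{v_2,r_2})\,dv_2=4r_1r_2$ all hold, and they do yield
$\mathbb{E}[\#(\gamma_1\cap\gamma_2)]\leq \frac{8\pi}{\area(S_k)^3}\bigl(\int_0^D r\sinh r\,dr\bigr)^2=O\bigl(D^2e^{2D}/g_k^3\bigr)$.
But this is exactly where the argument breaks: the bound is $O(\log^2 g_k/g_k)$ \emph{only if} $D=\log g_k+O(1)$, i.e.\ only if the surfaces achieve the absolute minimum possible diameter up to an additive constant. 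If $D=c\log g_k$ with any constant $c>1$, your estimate degrades to $O(g_k^{2c-3}\log^2 g_k)$, which is useless already for $c=3/2$. The existence of hyperbolic surfaces of genus $g\to\infty$ with diameter $(1+o(1))\log g$ is not a known fact, and your justification does not supply it: the lower bound $D\geq \log g+O(1)$ follows from the ball-area comparison as you say, but the matching upper bound for Buser--Sarnak (or any congruence arithmetic) surfaces is unsupported. What the spectral gap $\lambda_1\geq 3/16$ gives, via Cheeger--Buser, is $\diam=O(\log g)$ with a multiplicative constant substantially larger than $1$; random surfaces have diameter asymptotic to $2\log g$; and the analogy with LPS Ramanujan graphs (whose diameter is provably at least $\tfrac43\log_{d-1}n$, by Sardari) suggests that congruence surfaces may genuinely fail to have diameter $(1+o(1))\log g$. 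So the keystone of your proof is an unproved (and plausibly false, for your chosen family) geometric assertion.

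The paper's proof is built precisely to avoid this sensitivity: it uses congruence covers of the $(4,4,4)$ triangle group, needs only $\diam=O(\log g_k)$ with an arbitrary constant (coming from $\lambda_1\geq 3/16$), and compensates by exploiting a group acting transitively on the $\Theta(g_k)$ triangles of the induced triangulation. Each random shortest-path edge meets $O(\log g_k)$ triangles, so the total congestion is $O(n^2\log g_k)$; by transitivity every triangle has the same expected congestion $O(n^2\log g_k/g_k)$, and squaring per triangle and summing gives $O(n^4\log^2 g_k/g_k)$. Here the diameter constant only enters the implied constant, not the exponent. If you want to salvage your route, you would have to keep the restriction to sub-cut-locus (minimizing) segments rather than integrating over all segments up to length $D$, which amounts to proving an equidistribution property of shortest paths on these surfaces --- essentially the role played by the transitive triangle action in the paper. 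Separately, a minor point: pairs of edges of $K_n$ sharing a vertex are not two independent arcs, so they are not covered by your pairwise estimate; you should note that two shortest paths with a common endpoint cannot cross (a transversal crossing would produce a broken minimizing path), since the naive bound $O(n^3)$ for such pairs is not dominated by $n^4\log^2 g_k/g_k$ when $g_k$ is large relative to $n$.
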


Notice that by embedding a graph $G$ on $n$ vertices at random on the complete graph, this result implies that for any graph, there exists a geometric drawing with at most $O(m^2 \frac{\log^2 g_k}{g_k})$ crossings. 

The construction behind Theorem~\ref{thm:upperbound} also helps give geometric meaning to the quantities in Theorem~\ref{main}. Roughly speaking, this is how it works: we consider families of compact arithmetic surfaces, which are obtained by taking the quotient of the hyperbolic plane by specific well-chosen subgroups of triangle groups called congruence subgroups. Those have two very convenient properties. First, by construction, there is an isometry group acting transitively on their triangles. Second, the choice of congruence subgroups gives these surfaces expander-like properties, namely a uniform lower bound on their first positive eigenvalue, and hence they have diameter which is on the order of the logarithm of their genus $g_k$. Now by randomly placing vertices, and by taking shortest paths between them, each shortest path will cross $O(\log(g_k))$ triangles. As there are roughly $g_k$ triangles, this spaces out the intersections between shortest paths to get the desired bound. With this in mind, the logarithmic factor becomes a distance, and the genus in the denominator is the order of the isometry group or the area of the surface. We mention in passing that the compact arithmetic surfaces have been conjectured to be asymptotically optimal for the systolic inequality among hyperbolic surfaces~\cite{schaller1998geometry}.

\begin{remark}
    While the main contribution of Theorem~\ref{thm:upperbound} is its geometric nature, a topological drawing of $K_n$ which satisfies $\Cr_g(K_n)=O(n^4\frac{\log ^2 g}{g})$ can be obtained using a standard cut-flow duality approach~\cite{leighton1999multicommodity}: here is a quick sketch (the reader might compare it to the construction in ~\cite{ShahrokhiSzekelyVrto1998}). The idea is to paste pairs of pants following the combinatorics of $3$-regular expander graph on $g$ vertices.
It is well-known~\cite[Theorem~1]{leighton1999multicommodity}  that there exists uniform multi-commodity flows on expander graphs on $g$ vertices with vertex congestion $O(g\log g)$. If instead of sending one unit of flow per pair of vertices we send $\frac{n^2}{g^2}$ units of flow, then this turns into a vertex congestion of $O(\frac{n^2\log g}{g})$. Extracting an integer flow, we can interpret it as a map from the complete graph $K_n$ into the expander, which in turn can be readily transformed into a drawing of the complete graph on the surface. The crossing number of this drawing in each pair of pants is bounded above by the square of the congestion at the corresponding vertex. All in all, we obtain at most $O(g(\frac{n^2\log g}{g})^2)=O(n^4\frac{\log^2 g}{g})$ crossings, as desired. 
\end{remark}

\section{Preliminaries}

In this paper, all graphs are simple and loopless. A drawing of a graph $G$ on a closed surface $\Sigma$ is a continuous map from (the geometric realization of) $G$ to $\Sigma$. Said differently, we represent each vertex by a distinct point and each edge by an arc with endpoints on the points representing the vertices. Throughout the article, we are going to make some assumptions on drawings: No two vertices are represented by the same point, any edge is disjoint from any vertex, except when the vertex is one of its endpoints, and three distinct edges never pass through a same point (unless it is an endpoint of all three of them). In the following, we identify a vertex and the point that represents it and we identify an edge with the arc that represents it.

We denote by $\Sigma$ the closed orientable surface of genus $g$, which has Euler characteristic $\chi(\Sigma)=2-2g$. If $D$ is a surface, $g(D)$ denotes its genus. 

We are going to endow $\Sigma$ with a hyperbolic metric, that is a Riemannian metric of constant curvature $-1$. Its universal cover is the hyperbolic plane. We recall a couple of facts from Riemannian and hyperbolic geometry. If $D(c,r)$ is a disk in the hyperbolic plane, centered at $c$ of radius $r$, then 
\[\area(D(c,r))=4\pi \sinh^2 \frac{r}{2}\]

It is not hard to see from this inequality that for every $r$, \[\pi r^2 \leq  \area(D(c,r)),\]
meaning that hyperbolic disks are of greater area than their  Euclidean analogues.

 The Gauss-Bonnet theorem shows that a smooth Riemannian surface $\Sigma$ with smooth boundary satisfies 
 \[\int_\Sigma K(x) da(x)+\int_{\partial \Sigma} \kappa(x) d\ell(x)=2\pi \chi(\Sigma),\]

In the first integral, $K(x)$ denotes the Gaussian curvature of the surface $\Sigma$ at the point $x$, and $da$ is the Riemannian area element. In the second integral, $\kappa(x)$ denotes the geodesic curvature of the boundary of the surface $\Sigma$, and $d\ell$ is the Riemannian length element.  Finally $\chi(\Sigma)$ is the Euler characteristic of the surface. In a hyperbolic surface the Gaussian curvature is uniformly $K=-1$.

\section{The lower bound}

We start with a few simple geometric and combinatorial lemmas.

The following lemma is immediate by comparing the area formula of a hyperbolic disk and the area of a genus $g$ hyperbolic surface. 

\begin{lemma}\label{disk_embedded} The maximal radius of a disk embedded in a hyperbolic surface $\Sigma$ of genus $g$ is at most $\log g+\log 4$.
\end{lemma}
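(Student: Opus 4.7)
The plan is to compare the area of the embedded disk with the total area of the surface, using Gauss-Bonnet to compute the latter and the formula for the area of a hyperbolic disk (stated earlier in the Preliminaries) to compute the former.

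First, I would apply the Gauss-Bonnet theorem to $\Sigma$ itself. Since $\Sigma$ is closed (no boundary contribution) and has constant curvature $K \equiv -1$, the theorem gives
\[
-\area(\Sigma) = 2\pi \chi(\Sigma) = 2\pi(2-2g),
\]
hence $\area(\Sigma) = 4\pi(g-1)$. Next, if $D(c,r) \subset \Sigma$ is an embedded disk of radius $r$, then monotonicity of area yields $\area(D(c,r)) \leq \area(\Sigma)$, i.e.
\[
4\pi \sinh^2(r/2) \leq 4\pi (g-1),
\]
using the formula recalled in Section~2. This simplifies to $\sinh^2(r/2) \leq g-1$.

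The remainder is a routine manipulation of hyperbolic functions. Using the identity $\sinh^2(r/2) = (\cosh(r)-1)/2$, the inequality becomes $\cosh(r) \leq 2g-1$. Since $\cosh(r) \geq e^r/2$ for every $r \geq 0$, this gives
\[
e^r \leq 4g - 2 < 4g,
\]
and taking logarithms yields $r < \log g + \log 4$, as desired.

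There is essentially no obstacle here: the only thing to be mindful of is choosing the cleanest way to extract a bound on $r$ from a bound on $\sinh(r/2)$, and the identity $\sinh^2(r/2) = (\cosh(r)-1)/2$ combined with $\cosh(r) \geq e^r/2$ gives exactly the constant $\log 4$ in the statement without extra slack. An alternative would be to bound $\sinh(r/2) \geq (e^{r/2}-1)/2$ directly, but this produces a slightly messier constant and would require a small additional estimate using the assumption $g \geq 2$.
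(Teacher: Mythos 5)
Your proof is correct and follows exactly the argument the paper intends (the paper simply declares the lemma ``immediate by comparing the area formula of a hyperbolic disk and the area of a genus $g$ hyperbolic surface''): comparing $4\pi\sinh^2(r/2)$ with $\area(\Sigma)=4\pi(g-1)$ and extracting $r<\log g+\log 4$ via $\cosh r\geq e^r/2$. Nothing further is needed.
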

We note that an optimal upper bound on the size of a largest embedded disk was obtained by C. Bavard \cite{bavard}.

\begin{lemma}\label{disk_genus} If $\Sigma$ is a hyperbolic surface, for every $x\in \Sigma$ and $c>0$, the genus of the metric ball $B(x,c \log g)$ is at most $g^c/4+1$. 
\end{lemma}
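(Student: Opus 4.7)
The plan is to apply the Gauss--Bonnet formula to the ball $B = B(x, r)$ with $r = c\log g$, viewed as a subsurface of $\Sigma$ with boundary, and combine the resulting Euler-characteristic identity with an area estimate obtained by lifting to the universal cover.

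First, I would bound the area of $B$. Lifting via the universal cover $\pi : \Hyp^2 \to \Sigma$ and fixing a lift $\tilde{x}$ of $x$, one has $B(x, r) = \pi(D(\tilde{x}, r))$, and since $\pi$ is a local isometry the projection cannot increase area:
\[
\area(B) \;\leq\; \area(D(\tilde{x}, r)) \;=\; 4\pi \sinh^2(r/2) \;\leq\; \pi e^{r} \;=\; \pi g^{c},
\]
where we used $\sinh(t) \leq e^{t}/2$.

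Next I would apply Gauss--Bonnet. For a generic radius $r'$ slightly larger than $r$, the level set $\partial B(x, r')$ avoids the cut locus of $x$ and is a disjoint union of smooth distance circles; lifting to $\Hyp^2$ shows that each point of this boundary carries geodesic curvature $\coth(r') > 0$. Together with $K \equiv -1$, Gauss--Bonnet yields
\[
2\pi \chi(B(x, r')) \;=\; -\area(B(x, r')) \;+\; \coth(r')\,\length(\partial B(x, r')) \;\geq\; -\area(B(x, r')).
\]
Writing $\chi(B) = 2 - 2g(B) - b(B)$ with $b(B) \geq 0$ boundary components, this rearranges to $g(B(x, r')) \leq 1 + \area(B(x, r'))/(4\pi)$. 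By monotonicity of genus under subsurface inclusion (cap off $\partial B(x,r')$ to obtain a closed surface of the same genus, into which $B(x,r)$ still embeds), one has $g(B(x, r)) \leq g(B(x, r'))$; letting $r' \to r^{+}$ through generic values and invoking the area bound gives $g(B(x, r)) \leq 1 + g^{c}/4$, as required.

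The only real subtlety, and hence the main (if minor) obstacle, is the regularity of $\partial B(x, r)$ at cut-locus points, where the distance function may fail to be smooth and the boundary may have corners. This is handled by the generic-radius argument above, which is legitimate because the critical values of the distance function $d(x, \cdot)$ form a discrete set, so smooth $r'$ arbitrarily close to $r$ always exist. Alternatively, one could apply a corner-corrected Gauss--Bonnet at the critical value $r$ itself: the exterior angles at the cut-locus corners have the sign that only strengthens the above inequality.
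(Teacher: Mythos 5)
Your overall strategy (area comparison with a hyperbolic disk plus Gauss--Bonnet and $\chi=2-2g-b$) is the same as the paper's, and the area bound $\area(B)\leq \pi g^c$ is fine. The gap is in the treatment of the boundary. First, the claim that for generic $r'$ the level set $\partial B(x,r')$ avoids the cut locus is false: the cut locus of $x$ is a connected $1$-complex on which the distance function $d(x,\cdot)$ takes every value between the injectivity radius at $x$ and the eccentricity of $x$, so \emph{every} sphere of radius in that range meets it --- and this is exactly the range in which the ball can have positive genus (below it the ball is an embedded disk, above it the ball is all of $\Sigma$). Genericity of $r'$ only buys you finitely many transversal crossings, i.e.\ a piecewise-smooth boundary with corners, not a smooth one. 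Second, your fallback has the sign backwards. At a point where the sphere crosses the cut locus, the ball is locally the union of two overlapping metric disks (wavefronts centered at two distinct lifts of $x$ in $\Hyp$), so its interior angle there is reflex ($>\pi$) and the exterior (turning) angle is \emph{negative}. These corner terms therefore work against you: the inequality you need, $\int_{\partial B}\kappa + \sum_i\theta_i \geq 0$, is not automatic, since the positive contribution $\coth(r')\cdot\length(\partial B)$ must be weighed against corner defects that can each be as bad as $-\pi$. As written, neither of your two routes establishes $\area(B)\geq -2\pi\chi(B)$, and this is the only genuinely nontrivial point of the lemma.

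This is precisely the difficulty the paper's proof is engineered to avoid: it does not run Gauss--Bonnet on the ball with its topological boundary, but reparametrizes the ball as a ``self-touching disk,'' i.e.\ the ball cut open along the cut locus. The key geometric fact is that the cut locus of a point on a hyperbolic surface consists of geodesic arcs (pieces of perpendicular bisectors between $\tilde x$ and its other lifts), so the boundary of the cut-open domain is made of distance-circle arcs of geodesic curvature $\coth r>0$ and geodesic arcs of curvature $0$ --- nonnegative curvature everywhere --- and the Euler-characteristic bookkeeping is then done on this object rather than on the naked sphere with its reflex corners. To repair your argument you would either have to carry out an honest corner accounting for $\partial B(x,r')$ (which amounts to redoing that bookkeeping), or adopt the cut-open parametrization as in the paper.
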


\begin{figure}
    \centering
    \includegraphics[width=0.5\linewidth]{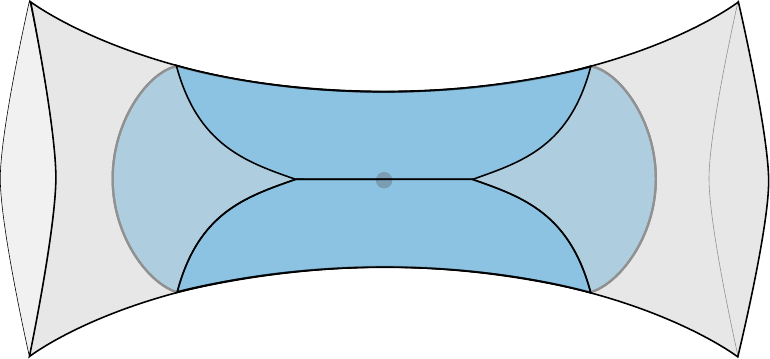}
    \caption{Even if the metric ball is not a disk, it can be parametrized as a self-touching disk with nonnegative geodesic curvature on its boundary. In this example, the center of the metric ball $x$ is behind the collar.}
    \label{MetricBall}
\end{figure}

\begin{proof}
The metric ball $B(x,c \log g)$ is in general not a disk and might self-intersect. If that is the case, we reparametrize it as an embedding $f\colon D \to B(x,c \log g)$ from an open disk $D$. It is not hard to see that one can assume that the closure of $f(D)$ is the closed ball  $B(x,c \log g)$ and the boundary of $f(D)$ is the cut locus of $x$.  This parametrization can be chosen so that the interior of $D$ is embedded and that the geodesic curvature of $\partial D$ is nonnegative everywhere, see Figure~\ref{MetricBall}.  The Gauss-Bonnet formula yields: 
\[-\area(D)=\int_D -1 da(x)= 2\pi \chi(\Sigma)-\int_{\partial D} \kappa(x) d\ell(x), \] hence 
\[\area(D)\geq -2\pi \chi(D)=4\pi g(D)-4\pi.\]
On the other hand, if we denote by $\tilde{D}(r)$ a disk in the hyperbolic plane of the same radius as $D$, we obtain 
\[\area(D)\leq \area(\tilde{D}) = 4 \pi \sinh^2(c \log g/2) \leq \pi e^{c \log g}= \pi g^c.\]
Putting together these the two inequalities, we obtain $g(D)\leq g^c/4+1$, as claimed.
\end{proof}

The following lemma allows us to find dense low-diameter subgraphs in any dense graph.

\begin{lemma}\label{lowdiameter}
Let $G$ be a graph with $m$ edges and $n$ vertices. There exists a vertex $x$ such that the induced subgraph of vertices at graph distance at most two from $x$ has at least $m^2/8n^2$ edges. 
\end{lemma}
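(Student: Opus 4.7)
The plan is to prove the lemma by a simple averaging argument. For $x \in V(G)$, let $B_2(x) = \{y \in V(G) : d_G(x,y) \leq 2\}$ and let $f(x)$ denote the number of edges in the induced subgraph $G[B_2(x)]$. I will show that $\sum_{x} f(x) = \Omega(m^2/n)$, so that some $x$ realizes $f(x) = \Omega(m^2/n^2)$, which is stronger than the required $m^2/(8n^2)$.

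The key observation driving the double-count is: for any edge $e = uv$ and any vertex $x \in N(u) \cup N(v)$, both endpoints of $e$ lie in $B_2(x)$ (one at distance $1$, the other at distance at most $2$), so $e$ is counted in $f(x)$. Swapping the order of summation over pairs $(x,e)$ with $x \in N(u) \cup N(v)$ therefore gives
\[ \sum_{x \in V(G)} f(x) \;\geq\; \sum_{uv \in E(G)} |N(u) \cup N(v)| \;\geq\; \sum_{uv \in E(G)} \frac{d(u) + d(v)}{2} \;=\; \frac{1}{2}\sum_{v \in V(G)} d(v)^2, \]
where the middle inequality uses $|N(u) \cup N(v)| \geq \max(d(u), d(v)) \geq (d(u)+d(v))/2$, and the final equality is by noting that each vertex $v$ contributes $d(v)$ exactly $d(v)$ times, once for each incident edge.

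To finish, I apply the Cauchy--Schwarz inequality (or equivalently the power-mean inequality): $\sum_v d(v)^2 \geq (\sum_v d(v))^2/n = 4m^2/n$. Combining with the display above gives $\sum_x f(x) \geq 2m^2/n$, and averaging over the $n$ vertices produces some $x$ with $f(x) \geq 2m^2/n^2$, which comfortably exceeds the required $m^2/(8n^2)$.

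I do not foresee any real obstacle; the argument is a textbook first-moment computation and the loose constant $1/8$ in the statement leaves plenty of slack (a factor of $16$), allowing freedom in whether one uses open or closed neighborhoods and in how sharp the Cauchy--Schwarz bound is made.
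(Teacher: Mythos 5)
Your proof is correct, and it takes a genuinely different route from the paper. The paper's argument is a pruning one: repeatedly delete vertices of degree less than $\frac{m}{2n}$ (losing at most $\frac{m}{2}$ edges), then pick any non-isolated vertex $x$ of the remaining graph; since $x$ has at least $\frac{m}{2n}$ neighbours and each neighbour still has degree at least $\frac{m}{2n}$, the edges touching $N(x)$ number at least $\frac{1}{2}\cdot\frac{m}{2n}\cdot\frac{m}{2n}=\frac{m^2}{8n^2}$, and all of them lie within distance two of $x$. You instead run a global first-moment computation: the exact double count $\sum_x f(x)=\sum_{e}\left|\{x: e\subseteq B_2(x)\}\right|\geq\sum_{uv\in E}\left|N(u)\cup N(v)\right|$, the bound $\left|N(u)\cup N(v)\right|\geq\frac{d(u)+d(v)}{2}$, the identity $\sum_{uv\in E}\bigl(d(u)+d(v)\bigr)=\sum_v d(v)^2$, and Cauchy--Schwarz. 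Each step checks out, and averaging gives a vertex with at least $\frac{2m^2}{n^2}$ edges in its distance-two ball, so you even improve the constant by a factor of $16$ (irrelevant for the application, where constants are not optimized). The trade-off is mainly stylistic: the paper's pruning argument is shorter and produces an explicit high-minimum-degree subgraph around $x$, while your averaging argument is nonconstructive in the choice of $x$ but avoids the deletion step and gives a cleaner bound.
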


\begin{proof}
Starting with $G$, erase a vertex with degree less than $\frac{m}{2n}$, and repeat until every vertex has degree at least $\frac{m}{2n}$. This erases at most $n\frac{m}{2n}=\frac{m}{2}$ edges and the remaining graph is thus non-empty. Now, pick $x$ to be an arbitrary non-isolated vertex on the remaining subgraph. There are at least $\frac{1}{8}\frac{m^2}{n^2}$ edges with at least one vertex that neighbors $x$, which implies the lemma.
\end{proof}

\begin{lemma}\label{conclusion} Let $G$ a graph with $n$ vertices and $m$ edges, drawn on a surface $\Sigma$ with a metric such that the average length of an edge is less than $\ell$. Then there exists $x \in \Sigma$ such that the metric ball $B(x,4\ell)\subset \Sigma$ contains at least $\frac{1}{32}\frac{m^2}{n^2}$ edges.
\end{lemma}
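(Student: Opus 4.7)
The strategy is to combine Lemma~\ref{lowdiameter}, which localizes a dense portion of the graph near a single vertex, with a Markov-type argument that discards the long edges, so that combinatorial proximity translates into geometric proximity. The main obstacle is just the bookkeeping: we need to ensure that the edges eventually counted are not only short themselves but also have all their points close enough to a single center $x$ to lie inside $B(x,4\ell)$.

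First, I would prune the long edges. Since the average length is less than $\ell$, Markov's inequality shows that at most $m/2$ edges can have length exceeding $2\ell$. Let $G'$ denote the spanning subgraph consisting of those edges of length at most $2\ell$; then $G'$ has $m' \geq m/2$ edges on at most $n$ vertices, and by construction the geometric length of every edge of $G'$ is controlled.

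Next, I would apply Lemma~\ref{lowdiameter} to $G'$. This produces a vertex $v$ of $G'$ such that at least $(m')^2/(8n^2) \geq m^2/(32 n^2)$ edges of $G'$ have at least one endpoint adjacent to $v$ in $G'$, i.e.\ lie within graph distance two of $v$.

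Finally, set $x$ to be the point of $\Sigma$ representing $v$. For any such edge $yz$, we may assume $y$ is a neighbor of $v$ in $G'$, so that $d_\Sigma(x,y)\leq 2\ell$; moreover $yz$ itself has length at most $2\ell$ in the metric on $\Sigma$, so every point of the arc $yz$ lies within $2\ell$ of $y$ along the edge, and hence within $4\ell$ of $x$ by the triangle inequality. Consequently the entire arc $yz$ is contained in $B(x,4\ell)$, and the ball accordingly contains at least $m^2/(32 n^2)$ full edges, as required.
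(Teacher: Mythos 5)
Your proof is correct and follows essentially the same route as the paper: prune to edges of length at most $2\ell$ via Markov, apply Lemma~\ref{lowdiameter}, and use the triangle inequality to place the resulting edges in $B(x,4\ell)$. In fact you spell out the geometric containment step (using the stronger ``one endpoint adjacent to $v$'' form from the proof of Lemma~\ref{lowdiameter}, which is needed to get radius $4\ell$) more carefully than the paper's terse two-line argument.
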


\begin{proof} Let $G'$ be the subgraph of edges of length at most $2\ell$. By Markov's inequality, it has at least $\frac{m}{2}$ edges. Now apply Lemma~\ref{lowdiameter} to this graph. We obtain a metric ball that contains at least $\frac{1}{32}\frac{m^2}{n^2}$ edges of length at most $2\ell$, as claimed.
\end{proof}

From~\cite[Theorem~3.3]{ShahrokhiSykoraSzekelyVrto1996-2} we will use the following result, where we have chosen $c=2 \sqrt{3}$ and $\epsilon=3/4$ in their proof.

\begin{lemma}\label{their_theorem} For every graph $G$, if $m\geq \max(2 \sqrt{3} n,12 g, 24 \frac{n^2}{g})$, then $\Cr_g(G)\geq \frac{m^2}{32 g}$. 
\end{lemma}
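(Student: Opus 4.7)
The statement is the surface analogue of the Ajtai--Chv\'atal--Newborn--Szemer\'edi / Leighton crossing lemma, so my plan is to adapt the classical probabilistic deletion argument to a closed surface of genus $g$.

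First I would establish the base inequality: if $H$ is any graph with $n(H)$ vertices and $m(H)$ edges drawn on $\Sigma$, then removing one edge per crossing produces a simple graph embedded without crossings, and Euler's formula $n - m + f = 2 - 2g$ combined with $3 f \leq 2 m$ gives
\[
\Cr_g(H) \;\geq\; m(H) - 3\bigl(n(H) + 2g - 2\bigr),
\]
with the inequality holding trivially when $n(H) \leq 2$. Next I would apply random sampling: starting from a crossing-minimal drawing of $G$ on $\Sigma$, include each vertex independently with probability $p \in (0,1]$ and let $H$ be the induced drawn subgraph. Then $E[n(H)] = pn$, $E[m(H)] = p^2 m$, and a crossing survives iff its four endpoints are all kept, so $E[\Cr_g(H)] \leq p^4 \Cr_g(G)$. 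Taking expectations in the base inequality yields the master bound
\[
p^4 \Cr_g(G) \;\geq\; p^2 m - 3 pn - 6g + 6.
\]

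Finally I would optimize $p$. The choice $p = c\sqrt{g/m}$ with $c = 2\sqrt{3}$ has two pleasant features: the feasibility constraint $p \leq 1$ becomes precisely the hypothesis $m \geq 12 g$, and after dividing by $p^4$ the terms $p^2 m$ and $6g$ combine into a clean multiple of $m^2/g$. The leftover linear term $3 n m^{3/2}/(c^3 g^{3/2})$ is controlled by the hypothesis $m \geq 24 n^2/g$, which forces it to consume at most a fixed fraction (encoded by $\varepsilon = 3/4$ in the cited reference) of the main term, leaving the promised $m^2/(32 g)$. The third hypothesis $m \geq 2\sqrt{3}\, n$ is just a mild density requirement ensuring that the regime is not vacuous.

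Conceptually there is no real obstacle here beyond the classical crossing-lemma machinery; the main work is bookkeeping. The delicate part is matching the three numerical hypotheses to the choices $c = 2\sqrt{3}$ and $\varepsilon = 3/4$ so that the final constant $1/32$ comes out cleanly: one must simultaneously ensure $p \leq 1$, that $6g$ does not consume more than a prescribed fraction of $p^2 m$, and that $3pn$ is similarly controlled, all of which are sensitive to the choice of $c$.
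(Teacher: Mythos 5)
Your overall strategy --- vertex sampling combined with the genus-$g$ Euler bound $m(H)\le 3n(H)+6g-6$, giving the master inequality $p^4\Cr_g(G)\ge p^2m-3pn-6g+6$ --- is sound (adjacent edges and self-crossings can indeed be assumed not to cross in a crossing-minimal drawing, so a crossing survives with probability $p^4$). Note also that the paper itself does not prove this lemma: it imports it from Theorem~3.3 of Shahrokhi--S\'ykora--Sz\'ekely--Vr\v{t}o with the parameters $c=2\sqrt3$ and $\epsilon=3/4$, so the only issue is whether your reconstruction actually yields the stated constants.

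It does not, and this is precisely in the bookkeeping you postponed. With $p=2\sqrt3\sqrt{g/m}$ you get $\frac{m}{p^2}=\frac{m^2}{12g}$ and $\frac{6g}{p^4}=\frac{m^2}{24g}$, so the surviving main term is $\frac{m^2}{24g}$, and reaching $\frac{m^2}{32g}$ requires $\frac{3n}{p^3}\le\frac{m^2}{96g}$, i.e.\ $m\ge 48\frac{n^2}{g}$ --- twice the stated hypothesis. Under $m\ge 24\frac{n^2}{g}$ one only gets $\frac{3n}{p^3}=\frac{nm^{3/2}}{8\sqrt3\,g^{3/2}}\le\frac{m^2}{48\sqrt2\,g}$, hence $\Cr_g(G)\ge\bigl(\frac1{24}-\frac1{48\sqrt2}\bigr)\frac{m^2}{g}\approx\frac{m^2}{37.2\,g}$, short of $\frac{m^2}{32g}$. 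Nor can this be fixed by tuning $p$: at the extremal point allowed by the hypotheses, $m=12g$ and $n=\sqrt{mg/24}$, the quantity $\frac{m}{p^2}-\frac{3n}{p^3}-\frac{6g}{p^4}$ is decreasing in $1/p$ on $p\le 1$, so its maximum is at $p=1$ and equals $\bigl(1-\tfrac{1}{4\sqrt2}-\tfrac12\bigr)m\approx 0.323\,m$, whereas the target is $\frac{m^2}{32g}=\frac{3m}{8}=0.375\,m$; so for large $g$ the sampling-plus-Euler route provably cannot reach the constant $\frac1{32}$ under exactly these hypotheses. You can repair this in one of three ways: weaken the conclusion (your computation does give, say, $\Cr_g(G)\ge\frac{m^2}{38g}$, which would suffice for how the lemma is used in the paper after adjusting constants), strengthen the hypotheses (for instance $m\ge 16g$ and $m\ge 36\frac{n^2}{g}$ with $p^2=16g/m$ gives exactly $\frac{m^2}{16g}-\frac{3m^2}{128g}-\frac{m^2}{128g}=\frac{m^2}{32g}$), or follow the parametrized argument of the cited reference; but as written the proposal does not establish the lemma as stated.
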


We now have all the tools to prove our main theorem.

\begin{proof}[Proof of Theorem \ref{main}]
 
Given a drawing of $G$ on $\Sigma$ with a minimal number of crossings, let $H$ be the graph embedded in $\Sigma$ in which each crossing is replaced by a vertex of degree $4$. We add edges arbitrarily between the vertices of $H$ to obtain a triangulation $T$ of $\Sigma$. Note that this might require multiple edges but these will be discarded later on.

Applying the Koebe-Andreev-Thurston circle packing theorem (see for example Stephenson~\cite[Theorem~4.3]{stephenson2005introduction}) to $T$, we obtain a unique hyperbolic surface $X$, homeomorphic to $\Sigma$, together with a family of disks on $X$ which are in bijection with the vertices of $H$. The interiors of any two disks are disjoint, and two boundary circles touch if and only if the corresponding vertices share an edge in $T$. Drawing the vertices of $T$ at the centers of the circles and the edges of $T$ by shortest paths between the centers of touching circles yields an embedding of $T$ on $\Sigma$. 

We can draw the original graph $G$ on $X$, representing every edge of $G$ by a concatenation of shortest paths between the centers of the circles. Notice that when we do this, no edge of $H$ is used by more than one edge in $G$.

We can assume that $\varepsilon <1/4$, since this implies the theorem for higher values of $\varepsilon$. The proof now splits into two cases.

\textbf{Case 1:} If the average length of an edge of $G$ in $X$ is less than $\frac{1}{4}(1-2\varepsilon) \log g$, then, by Lemma \ref{conclusion}, there exist a metric ball $B:=B(x, (1-2\varepsilon) \log g)$, with $x$ one of the points representing the vertices of $G$, such that the subgraph $G'$ of edges of $G$ contained in $B$ has at least $m'\geq \frac{1}{32}\frac{m^2}{n^2}$ edges. By Lemma \ref{disk_genus}, $B(x, (1-2\varepsilon) \log g)$ has genus at most $g':=g^{1-2\varepsilon}+1$. We can assume, without loss of generality, that it has exactly this maximal genus $g'$, as any smaller value would make our conclusion even stronger. To apply Lemma~\ref{their_theorem}, we verify that our assumptions imply $$m'\geq \frac{1}{32}\frac{m^2}{n^2}\geq \max\left\{2 \sqrt{3} n,12 (g^{1-2\varepsilon}+1), 24 \frac{n^2}{g^{1-2\varepsilon}+1}\right\}.$$
So, if the average length is at most $\frac{1}{4}\left(1-2\varepsilon\right) \log g$, then $\Cr(G)\geq \frac{m^2}{32\left(g^{1-2\varepsilon}+1\right)}$ which is greater than $B(\varepsilon) m^2\frac{\log^2 g}{g}$ for any $g$ and small enough $B(\varepsilon)=O(\varepsilon^2)$.

\textbf{Case 2:} If the average length of an edge of $G$ is $X$ is at least $\frac{1}{4}(1-2\varepsilon) \log g$, then it is at least $\frac{1}{8} \log g$ since $\varepsilon<1/4$.

Consider  $\left(r_1,r_2, \ldots r_{|V(H)\setminus V(G)|}\right)$, the vector of hyperbolic radii of the circles that correspond to crossings in $H$. We now apply the Cauchy-Schwarz inequality to its scalar product with the vector $1_{V(H)\setminus V(G)}$:

\[\left(\sum_{i \in V(H)\setminus V(G)} r_i\right)^2 \leq \left(\sum_{i \in {V(H)\setminus V(G)}} r_i^2\right) \,\left|V(H) \setminus V(G)\right| =\left(\sum_{i \in {V(H)\setminus V(G)}} r_i^2\right) \, \Cr_g(G)\]

Euclidean disks are of lesser area than their hyperbolic counterparts. And since the disks do not overlap, this implies that \[\pi \sum_{i \in V(H)} r_i^2 \leq \sum \area(D(c_i,r_i))\leq \area\left(X\right) < 2\pi\left(2g-2\right).\]
Therefore 
\[\Cr_g(G)\geq \frac{1}{4g}\left(\sum_{i \in V(H)\setminus V(G)} r_i\right)^2.\]

Consider the set $$I:=\left\{i \in V(G): r_i\geq\frac{1}{4}\right\},$$ 
and the truncated surface

 $$X':=X \setminus\cup_{i\in I} D(i,r_i).$$

Since for $i$ in $I$ we have $\area(D(i,r_i))\geq \frac{\pi}{16}$, and these disks are disjoint, comparing with the area of the surface yields $|I|\leq 64g$.  Define $E'(G)$ to be the edges of $G$ in $X$ that are longer than $1$, and are either not incident to any of the vertices in $I$, or are longer than $4 \log g+ 4 \log 4$. By Lemma~\ref{disk_embedded} and our choice of parameters, at least half of the length of edges in $E'$ is in $X'$: for every $e\in E'$,
\[\length_X(e)\leq 2\length_{X'}\,(e \cap X').\]

 Since $E\setminus E'$ is the union of all the edges of length at most $1$ and of all the edges of length at most $4 \log g+4 \log 4$ incident to a vertex in $I$, we have
$$\sum_{e \in E\setminus E'} \length_{X'}(e \cap X')\leq \sum_{e \in E\setminus E'} \length_X(e)\leq
n(64g)(4\log g +4 \log 4)+m\leq 
346 ng\log g+m.$$
Putting everything together we obtain, denoting by $d(i)$ the degree of a vertex $i$ of $V(G)$:
\begin{eqnarray*}
    4\sum_{i \in V(H)\setminus V(G)} r_i &=&\sum_{e \in E}\length_{X'}(e\cap X')-\sum_{i\in V(G) \setminus I}r_id(i)\\
&\geq& \frac{1}{2}\sum_{e \in E'}\length_{X}(e) -\frac{2m}{4}\\
&\geq& \frac{1}{2}\left(\sum_{e \in E}\length_{X}(e)-m-346\,ng\log g\right)-\frac{m}{2}\\ 
  & \geq & \frac{m}{16} \log g-m-173\,ng\log g
\geq \frac{m}{32} \log g
\end{eqnarray*}

for large enough $\log g$, using our assumptions on $m$.

We conclude that $$\Cr(G) \geq \frac{1}{2^{14}}\frac{m^2}{g}\log^2 g,$$ 
whenever $\log g$ is larger than some universal constant. For smaller values of $g$, we conclude using Lemma \ref{their_theorem} again (but note that the value of $B(\varepsilon)$ depends on $\varepsilon$ as provided by the first case).

\end{proof}

\section{The upper bound}

The family of surfaces in Theorem~\ref{thm:upperbound} is a family of arithmetic surfaces. Such families of surfaces are defined via number-theoretical constructions and have a host of remarkable properties akin to those of expander graphs. Our Theorem~\ref{thm:upperbound} holds for \emph{any} family of compact arithmetic surfaces obtained by taking congruence covers but for the sake of concreteness we exhibit a specific one in the proof which is well tailored for our purposes. Its relevant properties are summarized in the following proposition.

\begin{proposition}\label{triangle444}
  There exists an infinite family of hyperbolic surfaces $S_k$ with the following properties:
  \begin{enumerate}
      \item\label{triangles} For every $k$, the surface $S_k$ admits a geodesic triangulation $\mathcal{T}_k$ made of identical hyperbolic triangles of angles $(\pi/4,\pi/4,\pi/4)$, and thus of area $\pi/4$.
      \item The genus $g_k$ of the surfaces $S_k$ is increasing and goes to infinity.
      \item\label{trans} For every $k$, there is a group $G_k$ acting on $S_k$ whose action is area-preserving and which acts transitively on the set of triangles in $\mathcal{T}_k$.
      \item\label{diameter} For every $k$, the diameter of the surface $S_k$ is $O(\log g_k)$.     
  \end{enumerate}
\end{proposition}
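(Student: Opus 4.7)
The plan is to realize each $S_k$ as $\Hyp/\Gamma_k$, where $\Hyp$ denotes the hyperbolic plane and $\Gamma_k$ runs through a family of principal congruence subgroups of the triangle group $\Delta = \Delta(4,4,4)$, i.e.\ the Coxeter group generated by reflections in the sides of a hyperbolic triangle $T$ of angles $(\pi/4,\pi/4,\pi/4)$. The group $\Delta(4,4,4)$ appears in Takeuchi's list of arithmetic triangle groups, hence embeds into the unit group of a quaternion algebra over a totally real number field. From this arithmetic realization one obtains a family of principal congruence subgroups $\Gamma_k \triangleleft \Delta$ of index $N_k \to \infty$, all but finitely many of which are torsion-free and contained in the orientation-preserving subgroup $\Delta^+$, so that $S_k := \Hyp/\Gamma_k$ is a closed orientable hyperbolic surface.

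Properties (1), (2) and (3) are then direct bookkeeping. The $\Delta$-invariant tiling of $\Hyp$ by isometric copies of $T$ descends to a geodesic triangulation $\mathcal{T}_k$ of $S_k$ by congruent $(\pi/4,\pi/4,\pi/4)$-triangles, each of area $\pi/4$ by Gauss-Bonnet applied to $T$, which yields (1). Since $\Delta$ is a Coxeter group acting simply transitively on its chambers (the triangles of the tiling), $\mathcal{T}_k$ has exactly $N_k$ triangles, so $\area(S_k) = N_k\,\pi/4$. A second application of Gauss-Bonnet, to $S_k$, then gives $g_k = 1 + N_k/16 \to \infty$, which is (2). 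Normality of $\Gamma_k$ in $\Delta$ yields a finite quotient $G_k := \Delta/\Gamma_k$ acting by isometries on $S_k$ (hence area-preservingly), and the induced action on $\mathcal{T}_k$ inherits transitivity from the action of $\Delta$ on triangles of $\Hyp$, giving (3).

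The crucial and hardest step is (4), and this is where I would expect the main obstacle to lie. The approach is to invoke the uniform spectral gap enjoyed by congruence covers of arithmetic Fuchsian groups: by Selberg-type estimates there is a constant $\lambda_0 > 0$ with $\lambda_1(S_k) \geq \lambda_0$ for all $k$, independently of the index $N_k$. By Buser's inequality this translates into a uniform lower bound $h(S_k) \geq h_0 > 0$ on the Cheeger constant of $S_k$. One then concludes by a standard ball-growth argument: as long as $\area(B(x,r)) \leq \area(S_k)/2$, the isoperimetric inequality gives $\frac{d}{dr}\area(B(x,r)) \geq h_0\cdot \area(B(x,r))$, so $\area(B(x,r))$ grows exponentially in $r$; comparing with the total area $\area(S_k) = N_k\,\pi/4$, every point of $S_k$ lies within distance $O(\log \area(S_k)) = O(\log g_k)$ of a fixed basepoint, establishing (4). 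The spectral gap for congruence covers is the one deep ingredient that has to be imported from number theory; everything else is classical hyperbolic geometry and counting.
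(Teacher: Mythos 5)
Your proposal follows essentially the same route as the paper: arithmeticity of the $(4,4,4)$ triangle group via Takeuchi, congruence subgroups giving compact orientable quotients with the descended triangulation and transitive isometric action of the finite quotient group, and a uniform Selberg-type spectral gap converted through Buser's inequality into a Cheeger constant bound yielding diameter $O(\log g_k)$. The only difference is that you inline the ball-growth argument for the diameter instead of citing the explicit bound the paper uses; to make that step complete you should also note that the exponential-growth estimate needs a starting ball of definite area, i.e.\ a uniform lower bound on the injectivity radius (this is the role of the constant $r_0$ in the paper's cited bound), which here follows at once because all the $\Gamma_k$ are torsion-free subgroups of the fixed cocompact lattice $T(4,4,4)$, whose hyperbolic elements have translation length bounded below by a universal constant.
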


The details of the construction and the proofs of the properties rely on deep number-theoretical arguments which fall outside of the scope of this paper. In the proof below we provide an overview of the construction and pointers to references providing the complete details of its properties.

\begin{proof}
    Let $T$ denote a hyperbolic triangle in the hyperbolic plane $\mathbb{H}$ with angles $(\pi/4,\pi/4,\pi/4)$ and denote its edges by $a,b$ and $c$. Such a triangle is unique up to isometries of the hyperbolic plane. The \emph{triangle group} $T(4,4,4)$ is the group generated by reflections with respect to $a$, $b$ and $c$. The elements in $T(4,4,4)$ are isometries of the hyperbolic plane but might be orientation-reversing. 

    It follows from a classification theorem of Takeuchi~\cite{takeuchi1977arithmetic} that the group $T(4,4,4)$ is \emph{arithmetic}. The precise definition of this notion is technical and outside the scope of this paper and we refer to standard textbooks such as Katok~\cite[Chapter~5]{katok1992fuchsian} or Maclachlan and Reid~\cite{maclachlan2003arithmetic}, but this implies that there is an infinite family of subgroups $\Gamma_k  \subseteq T(4,4,4)$ of finite index with the properties that (i) the elements of $\Gamma_k$ are orientation-preserving isometries of $\mathbb{H}$, (ii) for each $k$, the quotient $\Gamma_k / \mathbb{H}$ is compact and (iii) the resulting family of surfaces $S_k$ has genus $g_k$ going to infinity. Since $\Gamma_k$ is a subgroup of $T(4,4,4)$, the tessellation of $\mathbb{H}$ by the geodesic triangles of angles $(\pi/4,\pi/4,\pi/4)$ projects to a geodesic triangulation $\mathcal{T}_k$ of each $S_k$. Since the group $T(4,4,4)$ acts transitively on this tessellation of $\mathbb{H}$, the quotient group $G_k:=T(4,4,4)/\Gamma_k$ acts transitively on the triangles of $\mathcal{T}_k$. As the elements of $T(4,4,4)$ are isometries, this action is area-preserving. 

    Finally, for any hyperbolic surface $S$, its diameter $\diam(S)$is related to its \emph{spectral gap} $\lambda_1(S)$ which is the value of the first non-zero eigenvalue of its Laplace-Beltrami operator. We refer to Buser's classic textbook for an introduction to the spectral geometry of hyperbolic surfaces~\cite{buser2010geometry}. Following~\cite{magee2020letter}, we first have~\cite{buser1982note} $\lambda_1(S)\leq 2h(S)+10h^2(S)$, where $h(S):= \min_{M}  \frac{|\partial M|}{\min(\area(M),\area(S \setminus M))}$ is the \emph{Cheeger constant} of $S$. On the other hand, the diameter of a hyperbolic surface satisfies~\cite[Paragraph~4.6]{mirzakhani2013growth}

    \[ \diam(S) \leq 2\left(r_0+\frac{1}{h(S)}\log\left(\frac{\area(S)}{2|B(r_0)|}\right)\right),\]
    for any $r_0$ such that every disk $D(c,r_0)$ is embedded, and where $|B(r_0)|=4\pi \sinh^2(r_0/2)$ is its area. For our surfaces $S_k$, one can take $r_0$ to be a constant, and since $\area(S_k)=\Theta(g_k)$, we conclude that if $\lambda_1(S_k)$ is bounded by a constant that does not depend on $k$, we obtain the diameter bound $\diam(S_k)=O(\log g_k)$.
    
    It turns out that for all arithmetic surfaces, there is a universal lower bound $\lambda_1(S_k)\geq 3/16$. This follows from the Selberg 3/16 theorem~\cite{selberg1965estimation} (which proves this for specific families of \emph{non-compact} arithmetic surfaces) coupled with the Jacquet-Langlands transfer principle~\cite{jacquet1970automorphic}, as explained for example by Buser~\cite{buser1984bipartition}, Lubotzky~\cite[Theorem~6.3.4]{lubotzky1994discrete} and Vignéras~\cite{vigneras1983quelques}. For the specific case of the $T(4,4,4)$ group at hand, this is precisely stated in Lubotzky~\cite[Theorem~4.4.3 and the pararaph before]{lubotzky1994discrete}.
\end{proof}

The following lemma allows us to connect the geometric properties of shortest paths on $S_k$ with their combinatorial intersections with $\mathcal{T}_k$.

\begin{lemma}\label{lem:combinatorics}
    Let $\gamma$ be a shortest path of length $L$ on $S_k$ between two points. Then $\gamma$ intersects $O(L)$ triangles of $\mathcal{T}_k$, counted with multiplicity.
\end{lemma}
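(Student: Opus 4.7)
The plan is to lift $\gamma$ to the universal cover $\mathbb{H}$ of $S_k$ and then apply an area estimate for tubular neighborhoods of geodesic segments. Since $S_k = \Gamma_k\backslash \mathbb{H}$ is a smooth hyperbolic surface, the covering map $\pi : \mathbb{H} \to S_k$ is a local isometry, so $\gamma$ lifts to a geodesic segment $\tilde{\gamma}$ of length $L$ in $\mathbb{H}$, and $\mathcal{T}_k$ lifts to a tessellation $\tilde{\mathcal{T}}$ of $\mathbb{H}$ by isometric copies of the equilateral $(\pi/4,\pi/4,\pi/4)$ triangle $T$. The first step is to observe that the number of triangles of $\mathcal{T}_k$ crossed by $\gamma$, counted with multiplicity, equals the number $N$ of distinct tiles $\tilde{T} \in \tilde{\mathcal{T}}$ met by $\tilde{\gamma}$. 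Indeed, each visit of $\gamma$ to a triangle of $\mathcal{T}_k$ corresponds to a subarc of $\tilde{\gamma}$ lying in a single tile, and different visits yield different tiles (because tiles are convex, so $\tilde{\gamma}$ enters each of them at most once in $\mathbb{H}$). So the goal reduces to bounding $N$.

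Let $d := \diam(T) = \arccosh(1+\sqrt{2})$, a fixed positive constant. Any tile $\tilde{T} \in \tilde{\mathcal{T}}$ meeting $\tilde{\gamma}$ is entirely contained in the closed $d$-neighborhood $N_d(\tilde{\gamma}) \subset \mathbb{H}$: every point of $\tilde{T}$ lies within distance $\diam(\tilde{T}) = d$ of a point of $\tilde{T}$ on $\tilde{\gamma}$. Since the tiles have pairwise disjoint interiors and each has area $\pi/4$, summing areas yields
\[\frac{\pi}{4}\,N \leq \area\bigl(N_d(\tilde{\gamma})\bigr).\]

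A standard Fermi-coordinate computation in $\mathbb{H}$ gives $\area(N_d(\tilde{\gamma})) \leq 2L\sinh(d) + 2\pi(\cosh(d)-1)$, the two terms accounting respectively for the equidistant band of width $d$ around $\tilde{\gamma}$ and for the two half-disk caps at its endpoints. Since $d$ is a universal constant, this is $O(L)+O(1)$, and combining with the previous inequality gives $N = O(L)$ (the additive $O(1)$ is harmless for $L$ bounded below, and for very small $L$ the count is trivially bounded by a constant). The only mild subtlety is the bookkeeping in the first paragraph that matches visits-with-multiplicity on $S_k$ with distinct tiles in $\mathbb{H}$; the rest is a routine hyperbolic geometry computation.
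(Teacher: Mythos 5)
Your proof is correct, but it follows a genuinely different route from the paper's. The paper argues locally on $S_k$ via a thick--thin decomposition of the triangles: it cuts each triangle by small balls $B(v,\varepsilon)$ around its vertices, notes that any visit of $\gamma$ meeting a thick part has length bounded below by a constant, and that a shortest path inside a vertex ball meets only a bounded number (at most four) of triangles before re-entering a thick part; summing gives $O(L)$ visits. You instead lift $\gamma$ to a geodesic segment $\tilde{\gamma}$ in $\mathbb{H}$ and run a packing argument: convexity of the tiles guarantees each tile is entered at most once by $\tilde{\gamma}$ (which is exactly what controls the multiplicity), every tile met lies in the $d$-neighborhood of $\tilde{\gamma}$ with $d=\diam(T)$, and the Fermi-coordinate area bound $2L\sinh d+2\pi(\cosh d-1)$ divided by the tile area $\pi/4$ gives $O(L)+O(1)$. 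Your bookkeeping between visits with multiplicity downstairs and distinct tiles upstairs is the right point to be careful about, and your convexity argument handles it. What each approach buys: yours avoids any case analysis near vertices, yields explicit constants, and works verbatim for any tessellation by convex tiles of uniformly bounded diameter and area bounded below (it needs neither the $(\pi/4,\pi/4,\pi/4)$ structure nor transitivity, only that $\gamma$ is a geodesic so its lift enters each convex tile once); the paper's argument is purely local on the surface, never invokes the universal cover or convexity, and controls multiplicity directly by charging length to each visit. Both proofs, yours and the paper's, really give $O(L)+O(1)$ rather than $O(L)$ for very short paths, and your remark dismissing this is fine since in the application each edge contributes at most a constant in that regime.
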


\begin{proof}
    Since triangles in $\mathcal{T}_k$ have angles $(\pi/4,\pi/4,\pi/4)$, there are exactly eight triangles incident to each vertex. We pick a constant $\varepsilon>0$ that is smaller than half of the side-length of a triangles in $\mathcal{T}_k$ (for example, hyperbolic trigonometry shows that $\varepsilon =1/2$ will do), and consider, for each vertex $v$ of $\mathcal{T}_k$, the ball $B(v,\varepsilon)$.
    These balls cut each triangle $T=(v_1,v_2,v_3)$ into its \emph{thick part} $T \setminus (B(v_1,\varepsilon) \cup B(v_2,\varepsilon) \cup B(v_3,\varepsilon)$ and its \emph{thin part} $\bigcup_i T \cap B(v_i,\varepsilon)$. Our choice of $\varepsilon$ ensures that the thick part is non-empty and that the thin part of each triangle is made of three connected components.
    If a connected component $\gamma \cap T$ meets the thick part of a triangle $T$, then it has length at least $c$ where $c$ is a constant depending only on $\varepsilon$ (if we chose $\varepsilon=1/2$, some more hyperbolic trigonometry shows that we can take $c=1/2$). On the other hand, if it only meets the thin part of $T$, then it meets the thin part of at most four triangles before meeting a thick part again, since a shortest path in $B(v,\varepsilon)$ meets at most four triangles. Therefore $\gamma$ meets at most $O(L)$ triangles, counted with multiplicity.
    \end{proof}

We now have all the tools to prove Theorem~\ref{thm:upperbound}.

\begin{proof}[Proof of Theorem~\ref{thm:upperbound}]
    For each edge $e$ of a drawing of $K_n$ on $S_k$, we cut $e$ into maximal geodesic segments $(e_1,\ldots , e_m)$ where each $e_i$ is fully contained in a hyperbolic triangle $T \in \mathcal{T}_k$. Now, for each triangle $T$ in $\mathcal{T}_k$, we define its \emph{congestion} $\con(T)$ to be the number of geodesics segments it contains, summing over all the edges. The \emph{congestion} $\con(K_n)$ of a drawing of $K_n$ is the maximum congestion over all triangles. Since shortest paths pairwise cross at most once, the crossing number of $K_n$ is upper bounded by $\con^2(K_n) |\mathcal{T}_k|$.

By Proposition~\ref{triangle444}~\ref{triangles}, since the area of $S_k$ is $\Theta(g_k)$, there are $\Theta(g_k)$ triangles in $\mathcal{T}_k$. By Proposition~\ref{triangle444}~\ref{diameter}, since edges are shortest paths, each edge in the drawing of $K_n$ on $S_k$ has length $O(\log g_k)$. Coupled with Lemma~\ref{lem:combinatorics}, this implies that each edge is cut into $O(\log g_k)$ maximal geometric segments. Therefore, \begin{align}\sum_{T \in \mathcal{T}_k} \con(T)=O(n^2 \log g_k). \label{eq1}\end{align}

    Now, by Proposition~\ref{triangle444}~\ref{trans}, the action of $G_k$ on $S_k$ is area-preserving and thus for any $h \in G_k$, the drawings $K_n$ and $h(K_n)$ are equiprobable. Since the action of $G_k$ is transitive on the triangles of $\mathcal{T}_k$, it follows that each triangle has exactly the same congestion in expectation, which therefore equals $\mathbb{E}(\con(K_n))$. It follows from Equation~\ref{eq1} that $\mathbb{E}(\con(K_n)) =O(n^2 \frac{\log g_k}{g_k})$, and thus that $\mathbb{E}(\Cr(K_n))=O(n^4 \frac{\log^2 g_k}{g_k})$. This concludes the proof.
\end{proof}

Here again, we have made no efforts to optimize the constants, tracking them in this proof shows that the constant hidden in the $O(\cdot)$ notation is smaller than $10^5$.

\bibliographystyle{plain} 
\bibliography{crossing}

\begin{thebibliography}{10}

\bibitem{abrego2018bishellable}
Bernardo~M {\'A}brego, Oswin Aichholzer, Silvia Fern{\'a}ndez-Merchant, Dan
  McQuillan, Bojan Mohar, Petra Mutzel, Pedro Ramos, R~Bruce Richter, and
  Birgit Vogtenhuber.
\newblock Bishellable drawings of {$K_n$}.
\newblock {\em SIAM Journal on Discrete Mathematics}, 32(4):2482--2492, 2018.

\bibitem{abrego2012two}
Bernardo~M {\'A}brego, Oswin Aichholzer, Silvia Fern{\'a}ndez-Merchant, Pedro
  Ramos, and Gelasio Salazar.
\newblock The 2-page crossing number of {$K_n$}.
\newblock {\em Discrete \& Computational Geometry}, 49:747--777, 2013.

\bibitem{abrego2014shellable}
Bernardo~M. {\'A}brego, Oswin Aichholzer, Silvia Fernández-Merchant, Pedro
  Ramos, and Gelasio Salazar.
\newblock Shellable drawings and the cylindrical crossing number of {$K_n$}.
\newblock {\em Discrete and Computational Geometry}, 52(4):743--753, 2014.

\bibitem{ACNS1982}
Mikl{\'o}s Ajtai, V{\'a}clav Chv{\'a}tal, Monroe Newborn, and Endre
  Szemer{\'e}di.
\newblock Crossing-free subgraphs.
\newblock In {\em Theory and Practice of Combinatorics}, pages 9--12.
  North-Holland Mathematics Studies, 1982.

\bibitem{amini2018transfer}
Omid Amini and David Cohen-Steiner.
\newblock A transfer principle and applications to eigenvalue estimates for
  graphs.
\newblock {\em Commentarii Mathematici Helvetici}, 93(1):203--223, 2018.

\bibitem{arroyo2021drawings}
Alan Arroyo, Dan McQuillan, R~Bruce Richter, Gelasio Salazar, and Matthew
  Sullivan.
\newblock Drawings of complete graphs in the projective plane.
\newblock {\em Journal of Graph Theory}, 97(3):426--440, 2021.

\bibitem{balko2015crossing}
Martin Balko, Radoslav Fulek, and Jan Kynčl.
\newblock Crossing numbers and combinatorial characterization of monotone
  drawings of {$K_n$}.
\newblock {\em Discrete and Computational Geometry}, 53(1):107--143, 2015.

\bibitem{bavard}
Christophe Bavard.
\newblock Disques extr\'{e}maux et surfaces modulaires.
\newblock {\em Ann. Fac. Sci. Toulouse Math. (6)}, 5(2):191--202, 1996.

\bibitem{biswal2010eigenvalue}
Punyashloka Biswal, James~R Lee, and Satish Rao.
\newblock Eigenvalue bounds, spectral partitioning, and metrical deformations
  via flows.
\newblock {\em Journal of the ACM (JACM)}, 57(3):1--23, 2010.

\bibitem{bungener2024improving}
Aaron B{\"u}ngener and Michael Kaufmann.
\newblock Improving the crossing lemma by characterizing dense 2-planar and
  3-planar graphs.
\newblock In {\em 32nd International Symposium on Graph Drawing and Network
  Visualization (GD 2024)}, pages 29--1. Schloss Dagstuhl--Leibniz-Zentrum
  f{\"u}r Informatik, 2024.

\bibitem{buser1982note}
Peter Buser.
\newblock A note on the isoperimetric constant.
\newblock In {\em Annales scientifiques de l'{\'E}cole normale sup{\'e}rieure},
  volume~15, pages 213--230, 1982.

\bibitem{buser1984bipartition}
Peter Buser.
\newblock On the bipartition of graphs.
\newblock {\em Discrete applied mathematics}, 9(1):105--109, 1984.

\bibitem{buser2010geometry}
Peter Buser.
\newblock {\em Geometry and spectra of compact Riemann surfaces}.
\newblock Springer Science \& Business Media, 2010.

\bibitem{czabarka2019some}
Éva Czabarka, Ignatius Singgih, László~A. Székely, and Zhiyu Wang.
\newblock Some remarks on the midrange crossing constant.
\newblock {\em Acta Mathematica Hungarica}, 157(2):187--203, 2019.

\bibitem{elkies2017crossing}
Noam~D Elkies.
\newblock Crossing numbers of complete graphs.
\newblock {\em The Mathematics of Various Entertaining Subjects: Research in
  Games, Graphs, Counting, and Complexity, Volume 2}, 2:218, 2017.

\bibitem{Guy1960}
Richard~K. Guy.
\newblock The crossing number of the complete graph.
\newblock {\em Journal of Combinatorial Theory}, 4:376--390, 1960.

\bibitem{HararyHill1963}
Frank Harary and Anthony Hill.
\newblock On the number of crossings in a complete graph.
\newblock {\em Proceedings of the Edinburgh Mathematical Society},
  13(4):333--338, 1963.

\bibitem{hubard2025crossing}
Alfredo Hubard and Hugo Parlier.
\newblock Crossing number inequalities for curves on surfaces.
\newblock {\em arXiv preprint arXiv:2504.00916}, 2025.

\bibitem{jacquet1970automorphic}
H~Jacquet and RP~Langlands.
\newblock Automorphic forms on {$GL(2)$}.
\newblock {\em Lecture Notes in Mathematics}, 114, 1970.

\bibitem{katok1992fuchsian}
Svetlana Katok.
\newblock {\em Fuchsian groups}.
\newblock University of Chicago press, 1992.

\bibitem{kelner2011metric}
Jonathan~A Kelner, James~R Lee, Gregory~N Price, and Shang-Hua Teng.
\newblock Metric uniformization and spectral bounds for graphs.
\newblock {\em Geometric and Functional Analysis}, 21(5):1117--1143, 2011.

\bibitem{Leighton1983}
Frank~Thomson Leighton.
\newblock {\em Complexity Issues in {VLSI}: Optimal Layouts for the
  Shuffle-Exchange Graph and Other Networks}.
\newblock MIT Press, Cambridge, MA, 1983.

\bibitem{leighton1999multicommodity}
Tom Leighton and Satish Rao.
\newblock Multicommodity max-flow min-cut theorems and their use in designing
  approximation algorithms.
\newblock {\em Journal of the ACM (JACM)}, 46(6):787--832, 1999.

\bibitem{lubotzky1994discrete}
Alex Lubotzky.
\newblock {\em Discrete groups, expanding graphs and invariant measures},
  volume 125.
\newblock Springer Science \& Business Media, 1994.

\bibitem{maclachlan2003arithmetic}
Colin Maclachlan and Alan~W Reid.
\newblock {\em The Arithmetic ofHyperbolic 3-Manifolds}.
\newblock Springer, 2003.

\bibitem{magee2020letter}
Michael Magee.
\newblock Letter to {B}ram {P}etri, 2020.
\newblock Available at \url{https://www.mmagee.net/diameter.pdf}.

\bibitem{mirzakhani2013growth}
Maryam Mirzakhani.
\newblock Growth of {W}eil-{P}etersson volumes and random hyperbolic surface of
  large genus.
\newblock {\em Journal of Differential Geometry}, 94(2):267--300, 2013.

\bibitem{Moon1965}
John~W. Moon.
\newblock On the distribution of crossings in random complete graphs.
\newblock {\em Journal of the Society for Industrial and Applied Mathematics},
  13(2):506--510, 1965.

\bibitem{pach1996new}
János Pach, Gábor Tóth, and Joel Spencer.
\newblock New bounds on crossing numbers.
\newblock {\em Discrete \& Computational Geometry}, 16(3):291--312, 1996.

\bibitem{RingelYoungs1968}
Gerhard Ringel and J.~W.~T. Youngs.
\newblock Solution of the {Heawood} map-coloring problem.
\newblock {\em Proceedings of the National Academy of Sciences},
  60(2):438--445, 1968.

\bibitem{schaller1998geometry}
Paul Schmutz~Schaller.
\newblock Geometry of {R}iemann surfaces based on closed geodesics.
\newblock {\em Bulletin of the American Mathematical Society}, 35(3):193--214,
  1998.

\bibitem{selberg1965estimation}
Atle Selberg.
\newblock On the estimation of {F}ourier coefficients of modular forms.
\newblock In {\em Proceedings of Symposia in Pure Mathematics}, pages 1--15.
  American Mathematical Society, 1965.

\bibitem{ShahrokhiSykoraSzekelyVrto1996-2}
Farhad Shahrokhi, Ondrej S{\'y}kora, L{\'a}szl{\'o}~A. Sz{\'e}kely, and Imrich
  Vrt'o.
\newblock Crossing numbers: Bounds and applications.
\newblock In {\em Intuitive Geometry}, volume~6 of {\em Bolyai Society
  Mathematical Studies}, pages 179--206, 1996.

\bibitem{ShahrokhiSzekelyVrto1998}
Farhad Shahrokhi, L{\'a}szl{\'o}~A. Sz{\'e}kely, and Imrich Vrt'o.
\newblock Drawing of graphs on surfaces with few crossings.
\newblock {\em Algorithmica}, 16(1):118--131, 1996.

\bibitem{stephenson2005introduction}
Kenneth Stephenson.
\newblock {\em Introduction to circle packing: The theory of discrete analytic
  functions}.
\newblock Cambridge University Press, 2005.

\bibitem{streltsova2025spherical}
Elizaveta Streltsova and Uli Wagner.
\newblock Sublevels in arrangements and the spherical arc crossing number of
  complete graphs.
\newblock {\em arXiv preprint arXiv:2504.07770}, 2025.

\bibitem{takeuchi1977arithmetic}
Kisao Takeuchi.
\newblock Arithmetic triangle groups.
\newblock {\em Journal of the Mathematical Society of Japan}, 29(1):91--106,
  1977.

\bibitem{Turan1954}
P{\'a}l Tur{\'a}n.
\newblock A note of welcome.
\newblock {\em Journal of Graph Theory}, 1(1):7--9, 1954.

\bibitem{vigneras1983quelques}
Marie-France Vign{\'e}ras.
\newblock Quelques remarques sur la conjecture $\lambda_1\geq 1/4$, seminar on
  number theory, paris 1981--82.
\newblock {\em Progr. Math}, 38:321--343, 1983.

\end{thebibliography}

{\it Addresses:}\\
Department of Mathematics, University of Fribourg, Switzerland\\
Université Gustave Eiffel, CNRS, LIGM, Marne-la-Vallée, France.\\
{\it Emails:}\\
hugo.parlier@unifr.ch\\
alfredo.hubard@univ-eiffel.fr
arnaud.de-mesmay@univ-eiffel.fr

\end{document}